\documentclass[12pt,leqno]{article}
\usepackage{amsfonts,amsthm,amsmath}
\newtheorem{thm}{Theorem}
\newtheorem{prop}[thm]{Proposition}
\newtheorem{lem}[thm]{Lemma}
\newtheorem{cor}[thm]{Corollary}
\theoremstyle{remark}
\newtheorem{rem}[thm]{Remark}
\newtheorem{Q}[thm]{Question}
\newcommand{\FF}{\mathbb{F}}


\DeclareMathOperator{\wt}{wt}

\begin{document}

\title{On the performance of optimal
double circulant even codes}

\author{
T. Aaron Gulliver\thanks{Department of Electrical and Computer Engineering,
University of Victoria,
P.O. Box 1700, STN CSC, Victoria, BC,
Canada V8W 2Y2.
email: agullive@ece.uvic.ca}
\text{ and }
Masaaki Harada\thanks{
Research Center for Pure and Applied Mathematics,
Graduate School of Information Sciences,
Tohoku University, Sendai 980--8579, Japan.
email: mharada@m.tohoku.ac.jp.}
}

\maketitle

\begin{abstract}
In this note, we investigate the performance of
optimal double circulant even codes which are not self-dual,
as measured by the decoding error probability in bounded distance decoding.
To do this, we classify the optimal double circulant even codes
that are not self-dual which have the smallest weight distribution
for lengths up to $72$.
We also give some restrictions on the weight distributions of
(extremal) self-dual $[54,27,10]$ codes
with shadows of minimum weight $3$.
Finally, we consider the performance of extremal self-dual codes
of lengths $88$ and $112$.
\end{abstract}

\section{Introduction}
\label{Sec:Intro}
A (binary) $[n,k]$ {\em code} $C$ is a $k$-dimensional vector subspace
of $\FF_2^n$,
where $\FF_2$ denotes the finite field of order $2$.
All codes in this note are binary.
The parameter $n$ is called the {\em length} of $C$.
The {\em weight} $\wt(x)$ of a vector $x \in \FF_2^n$ is
the number of non-zero components of $x$.
A vector of $C$ is called a {\em codeword}.
The minimum non-zero weight of all codewords in $C$ is called
the {\em minimum weight} of $C$ and an $[n,k]$ code with minimum
weight $d$ is called an $[n,k,d]$ code.
A code with only even weights is called {\em even}.
Two codes are {\em equivalent} if one can be obtained from the other by a
permutation of coordinates.

Let $C$ be an $[n,k,d]$ code.
Throughout this note,
let $A_i$ denote the number of codewords of weight $i$ in $C$.
The  sequence $(A_0,A_1,\ldots,A_n)$ is called the {\em weight distribution} of $C$.
A code $C$ of length $n$ is said to be
{\em formally self-dual\/} if $C$ and $C^\perp$
have identical weight distributions, where
$C^\perp$ is the dual code of $C$.
A code $C$ is {\em isodual\/} if $C$ and $C^\perp$ are equivalent, and
$C$ is {\em self-dual\/} if $C=C^\perp$.
A self-dual code is an even isodual code, and an isodual code is a
formally self-dual code.
There are formally self-dual even codes
which are not self-dual.
One reason for our interest in formally self-dual even codes
is that for some lengths there are formally self-dual even codes
with larger minimum weights than any self-dual code of that length.
Double circulant codes are a remarkable class of isodual codes.


The question of decoding error probabilities was studied
by Faldum, Lafuente, Ochoa and Willems~\cite{FLOW}
for bounded distance decoding.
Let $C$ and $C'$ be $[n,k,d]$ codes with weight distributions
$(A_0,A_1,\ldots,A_n)$ and $(A'_0,A'_1,\ldots,A'_n)$, respectively.
Suppose that symbol errors are independent and the symbol error probability is small.
Then $C$  has a smaller decoding error probability than $C'$
if and only if
\begin{equation}\label{eq:WD}
(A_0,A_1,\ldots,A_n) \prec (A'_0,A'_1,\ldots,A'_n),
\end{equation}
where $\prec$ means the lexicographic order, that is,
there is an integer $s \in \{0,1,\ldots,n\}$ such that
$A_i=A'_i$ for all $i < s$ but $A_s < A'_s$~\cite[Theorem~3.4]{FLOW}.
We say that $C$ {\em performs better} than $C'$ if~\eqref{eq:WD} holds.
By making use of~\cite[Theorem~3.4]{FLOW},
Bouyuklieva, Malevich and Willems~\cite{BMW}
investigated and compared the performance of
extremal doubly even and singly even self-dual codes.

In this note, we consider the performance of
optimal double circulant even codes which are not self-dual using~\cite[Theorem~3.4]{FLOW}.
To do this, we classify the optimal double circulant even codes
that are not self-dual which have the smallest weight distribution
for lengths up to $72$.
For $(2n,d)= (32,8)$, $(36,8)$, $(38,8)$, $(40,8)$, $(46,10)$,
$(52,10)$, $(56,12)$, $(60,12)$, $(62,12)$, $(64,12)$,
$(66,12)$ and $(68,12)$,
we demonstrate that there is an optimal double circulant even
$[2n,n,d]$ code $C$ which is not self-dual
such that $C$ performs better than any self-dual $[2n,n,d]$ code.
We also give some restrictions on weight distributions of
(extremal) self-dual $[54,27,10]$ codes
having shadows of minimum weight $3$.
Finally, we consider the performance of extremal self-dual codes
of lengths $88$ and $112$.

\section{Double circulant codes}

An $n \times n$ circulant matrix has the form:
\[
\left(
\begin{array}{ccccc}
r_1&r_2&r_3& \cdots &r_{n} \\
r_{n}&r_1&r_2& \cdots &r_{n-1} \\
\vdots &\vdots & \vdots && \vdots\\
r_2&r_3&r_4& \cdots&r_1
\end{array}
\right)
\]
so that each successive row is a cyclic shift of the previous one.
A {\em pure double circulant} code and a {\em bordered
double circulant} code have generator matrices of the form:
\begin{equation}\label{eq:pDCC}
\left(\begin{array}{ccccc}
{} & I_n  & {} & R & {} \\
\end{array}\right)
\end{equation}
and
\begin{equation}\label{eq:bDCC}
\left(\begin{array}{ccccccccc}
{} & {} & {}      & {} & {} & \alpha & 1  & \cdots &1  \\
{} & {} & {}      & {} & {} & 1     & {} & {}     &{} \\
{} & {} & I_{n} & {} & {} &\vdots & {} & R'     &{} \\
{} & {} & {}      & {} & {} & 1     & {} &{}      &{} \\
\end{array}\right),
\end{equation}
respectively,
where $I_n$ is the identity matrix of order $n$,
$R$ (resp.\ $R'$) is an $n \times n$ (resp.\ $n-1 \times n-1$)
circulant matrix, and $\alpha \in \FF_2$.
These two families are called {\em double circulant} codes.
Since we consider only even codes in this note,
$\alpha=0$ if $n$ is even and $\alpha=1$ if $n$ is odd.

It is a fundamental problem to classify double circulant codes over
binary and nonbinary fields as well as finite rings,
for modest lengths, up to equivalence.
There has been significant research on finding double circulant self-dual codes with the
large minimum weights (see e.g., \cite{GH-DCC}, \cite{GH-DCC88}, \cite{GH-DCC90},
\cite{HGK-DCC}).
Beyond self-dual codes,
few results on the classification of double circulant codes
are known (see e.g., \cite{G-H-DCC}).
One reason for this is that the classification of double circulant
codes which are not self-dual is much more difficult than
double circulant self-dual codes.

In this note, we consider codes $C$ satisfying
the following conditions:
\begin{itemize}
\item[(C1)]
$C$ is a pure (resp.\ bordered) double circulant even code of length $2n$ which is not self-dual.
\item[(C2)]
$C$ has the largest minimum weight $d_{P}$ (resp.\ $d_{B}$)
among pure (resp.\ bordered) double circulant codes of
length $2n$ which are not self-dual.
\item[(C3)]
$C$ has the smallest weight distribution
$(A_0,A_1,\ldots,A_n)$ under the lexicographic order $\prec$
among
pure (resp.\ bordered) double circulant even codes of length $2n$ and
minimum weight $d_{P}$ (resp.\ $d_{B}$) which are not self-dual.
\end{itemize}

We say that a double circulant even code which is not self-dual
is {\em optimal} if it has the largest minimum weight among all
double circulant even codes of that length which are not self-dual.

The following lemma is trivial.

\begin{lem}
\label{lem:wt}
If $C$ is a pure double circulant even code of length $2n$
with generator matrix~\eqref{eq:pDCC},
then every row of $R$ has odd weight.
If $C$ is a bordered double circulant even code of length $2n$
with generator matrix~\eqref{eq:bDCC},
then every row of $R'$ has even weight.
\end{lem}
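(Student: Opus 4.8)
The plan is to read off the weights of the generating rows directly from the two generator matrices~\eqref{eq:pDCC} and~\eqref{eq:bDCC}, and to use the one structural fact available: since $C$ is even, \emph{every} codeword has even weight, in particular every row of a generator matrix for $C$.

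For the pure case, the $i$-th row of~\eqref{eq:pDCC} is $(e_i \mid r_i)$, where $e_i$ denotes the $i$-th standard basis vector of $\FF_2^n$ and $r_i$ is the $i$-th row of $R$. This codeword has weight $1+\wt(r_i)$, which is even because $C$ is even; hence $\wt(r_i)$ is odd for every $i$, as claimed. For the bordered case I would treat the first row separately from the remaining $n-1$ rows. The first row carries the border $(\alpha,1,\ldots,1)$ alongside $e_1$; computing its weight recovers exactly the stated convention on $\alpha$ (namely $\alpha=0$ when $n$ is even and $\alpha=1$ when $n$ is odd), but says nothing about $R'$. For $2\le i\le n$, the $i$-th row of~\eqref{eq:bDCC} has the form $(e_i \mid 1, r'_{i-1})$, where the leading $1$ is the entry from the all-ones column bordering $R'$ and $r'_{i-1}$ is the $(i-1)$-th row of $R'$. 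Its weight is $2+\wt(r'_{i-1})$, which is even if and only if $\wt(r'_{i-1})$ is even; as $i$ runs over $2,\ldots,n$ this covers every row of $R'$.

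There is essentially no obstacle: the only points requiring a moment's care are the bookkeeping of which block of the border each stray $1$ belongs to, and the fact that the circulant structure of $R$ (resp.\ $R'$) is not even needed, since the row-by-row computation above is already uniform over all rows. One could alternatively invoke the standard fact that a code with generator matrix $(I_k \mid M)$ is even precisely when each of its rows has even weight, but the direct computation is the cleanest route and is exactly why the authors call the lemma trivial.
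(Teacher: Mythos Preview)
Your argument is correct and is exactly the direct row-by-row weight computation that the authors have in mind when they declare the lemma ``trivial'' and omit a proof. There is nothing to add: the paper provides no alternative argument, and your bookkeeping for both the pure and bordered cases (including the recovery of the convention on $\alpha$) is accurate.
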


\section{Weight enumerators}

The weight enumerator $W$ of a formally self-dual even code of length $2n$
can be represented as an integral combination of Gleason polynomials
(see~\cite{MS73}), so that
\begin{equation}\label{eq:WE}
W= \sum_{j=0}^{\lfloor n/4 \rfloor}a_j(1+y^2)^{n-4j}\{y^2(1-y^2)^2\}^j,
\end{equation}
for some integers $a_j$ with $a_0=1$.
Note that the weight enumerators of
formally self-dual even codes of length $2n$ as well as
self-dual codes of length $2n$ can be expressed using~\eqref{eq:WE}.
This is one of the reasons why we compare the performance of optimal double circulant even codes
which are not self-dual with self-dual codes having the largest minimum weight.

For the following parameters
\begin{equation*}\label{eq:para}
\begin{array}{ll}
(2n,d)=&
(32, 8),
(34, 8),
(36, 8),
(38, 8),
(40, 8),
(42,10),
(44,10),
(46,10),
\\ &
(48,10),
(50,10),
(52,10),
(54,10),
(56,12),
(58,12),
(60,12),
\\ &
(62,12),
(64,12),
(66,12),
(68,12),
(70,12),
(72,14),
\end{array}
\end{equation*}
the possible weight enumerators $W_{2n,d}=\sum_{i=0}^{2n}A_i y^i$ of
a formally self-dual even $[2n,n,d]$ code
can be determined as follows.
The coefficients $A_i$ ($i=0,d,d+2,d+4,d+6$)
are listed in Table~\ref{Tab:WE},
where $a,b,c$ are integers.
For $(2n,d)=(32,8)$, $(34,8)$, $(36,8)$, $(38,8)$,
$(42,10)$, $(44,10)$ and $(46,10)$,
the weight enumerator $W_{2n,d}$ is
completely determined by only $A_d$.
For $(2n,d)=(40,8)$, $(48,10)$, $(50,10)$, $(52,10)$, $(54,10)$,
$(56,12)$, $(56,12)$, $(58,12)$, $(60,12)$ and $(62,12)$,
the weight enumerator $W_{2n,d}$ is
completely determined by $A_d$ and $A_{d+2}$.
For the remaining values of $(2n,d)$,
the weight enumerator $W_{2n,d}$ is completely determined
by $A_d$, $A_{d+2}$ and $A_{d+4}$.

\begin{table}[thb]
\caption{Possible weight enumerators $W_{2n,d}$}
\label{Tab:WE}
\begin{center}
{\footnotesize
\begin{tabular}{c|c|c|c|c|c}
\noalign{\hrule height1pt}
$(2n,d)$ & $A_0$ & $A_d$ & $A_{d+2}$ & $A_{d+4}$ & $A_{d+6}$ \\
\hline
$(32, 8)$ & 1 & $a$ & $4960-8a$&$-3472+28a$&$34720-56a$\\
$(34, 8)$ & 1 & $a$ & $4114-7a$&$2516+20a$&$29172-28a$\\
$(36, 8)$ & 1 & $a$ & $3366-6a$&$6630+13a$&$30600-8a$\\
$(38, 8)$ & 1 & $a$ & $2717-5a$&$9177+7a$&$35910+5a$\\
$(40, 8)$ & 1 & $a$ & $-4a+b$ & $32110+2a-10b$ & $-54720+12a+45b$\\
$(42,10)$ & 1 & $a$ & $26117-9a$&$-10455+35a$&$286713-75a$\\
$(44,10)$ & 1 & $a$ & $21021-8a$&$19712+26a$&$250778-40a$\\
$(46,10)$ & 1 & $a$ & $16744-7a$&$38709+18a$&$249458-14a$\\
$(48,10)$ & 1 & $a$ & $-6a+b$ & $207552+11a-12b$ & $-606441+4a+66b$\\
$(50,10)$ & 1 & $a$ & $-5a+b$ & $166600+5a-11b$ & $-271950+15a+54b$\\
$(52,10)$ & 1 & $a$ & $-4a+b$ & $132600-10b$ & $-41990+20a+43b$\\
$(54,10)$ & 1 & $a$ & $-3a+b$ & $104652-4a-9b$ & $107406+20a+33b$\\
$(56,12)$ & 1 & $a$ & $-8a+b$ & $1343034+24a-14b$ & $-5765760-24a+91b$\\
$(58,12)$ & 1 & $a$ & $-7a+b$ & $1067838+16a-13b$ & $-3224452+77b$\\
$(60,12)$ & 1 & $a$ & $-6a+b$ & $843030+9a-12b$ & $-1454640+16a+64b$\\
$(62,12)$ & 1 & $a$ & $-5a+b$ & $660858+3a-11b$ & $-270940+25a+52b$\\
$(64,12)$ & 1 & $a$ & $-4a+b$ & $-2a-10b+c$ & $8707776+28a+41b-16c$\\
$(66,12)$ & 1 & $a$ & $-3a+b$ & $-6a-9b+c$ & $6874010+26a+31b-15c$\\
$(68,12)$ & 1 & $a$ & $-2a+b$ & $-9a-8b+c$ & $5393454+20a+22b-14c$\\
$(70,12)$ & 1 & $a$ & $-a+b$ & $-11a-7b+c$ & $4206125+11a+14b-13c$\\
$(72,14)$ & 1 & $a$ & $-6a+b$ & $7a-12b+c$ & $56583450+28a+62b-18c$\\
\noalign{\hrule height1pt}
\end{tabular}
}
\end{center}
\end{table}

\section{Performance of double circulant even codes}

A classification of optimal double circulant even codes
was given in~\cite{G-H-DCC} for lengths up to $30$.
For lengths $2n$ with $32 \le 2n \le 72$,
by determining the largest minimum weights $d_{P}$ (resp.\ $d_{B}$),
our exhaustive search found all distinct
pure (resp.\ bordered) double circulant even codes
satisfying conditions (C1)--(C3).
This was done by considering all
$n \times n$ circulant matrices $R$ in~\eqref{eq:pDCC}
(resp.\ $n-1 \times n-1$ circulant matrices $R'$ in~\eqref{eq:bDCC})
satisfying
the condition given in Lemma~\ref{lem:wt}.
Since a cyclic shift of the first row for a
code defines an equivalent code,
the elimination of cyclic shifts
substantially reduced the number of codes
which had to be checked further for equivalence to
complete the classification.
Then {\sc Magma}~\cite{Magma} was employed to determine code
equivalence which completed the classification.

In Table~\ref{Tab:Res}, we list the numbers $N_{P}$
and $N_{B}$ of inequivalent pure and bordered
double circulant even codes
satisfying conditions (C1)--(C3), respectively.
In Table~\ref{Tab:Res}, we also list
$d_{P}$,  $A_{d_{P}}$,
$d_{B}$ and $A_{d_{B}}$.
For the pure and bordered
double circulant even codes
satisfying conditions (C1)--(C3),
the first rows of $R$ in~\eqref{eq:pDCC}
and $R'$ in~\eqref{eq:bDCC}
are listed in Tables~\ref{Tab:P} and~\ref{Tab:B},
respectively.
The minimum weights $d$ and $(A_d,A_{d+2},A_{d+4})$
are also listed in the tables.

\begin{table}[thb]
\caption{Double circulant even codes satisfying  (C1)--(C3)}
\label{Tab:Res}
\begin{center}
{\footnotesize
\begin{tabular}{c|c|c|c||c|c|c||c|cc}
\noalign{\hrule height1pt}
$2n$
& $d_{P}$  & $A_{d_{P}}$ & $N_{P}$
& $d_{B}$  & $A_{d_{B}}$ & $N_{B}$
& $d_{SD}$  & \multicolumn{2}{c}{$A_{d_{SD}}$}  \\
\hline
32&  8& $348$  &  2&  8 &$ 300$&   1&8&  364 &\cite{C-S}\\
34&  8& $272$  & 15&  8 &$ 272$&  10&6&  \multicolumn{2}{c}{-}\\
36&  8& $153$  &  4&  8 &$ 153$&   3&8&  225 &\cite{C-S}\\
38&  8& $ 76$  &  1&  8 &$  72$&   1&8&  171 &\cite{C-S}\\
40&  8& $ 25$  &  1&  8 &$  38$&   2&8&  125 &\cite{C-S}\\
42& 10& $1680$ &  2& 10 &$1682$&  1&8&  \multicolumn{2}{c}{-}\\
44& 10& $1144$ &  1& 10 &$1267$&  3&8&  \multicolumn{2}{c}{-}\\
46& 10& $851$  &  1& 10 &$ 858$&  2&10&  1012 &\cite{C-S}\\
48& 10& $480$  &  1& 10 &$ 575$&   1&12& 17296 &\cite{C-S}\\
50& 10& $325$  &  1& 10 &$ 356$&  1&10&  196 &\cite{HT}\\
52& 10& $156$  &  1& 10 &$ 150$&  1&10&  250 &\cite{C-S}\\
54& 10& $ 27$  &  1& 10 &$  52$&1&10&  7-135 &\cite{BO}, \cite{C-S}\\
56& 12& $4060$ &  1& 10 &$   3$&1&12&  4606-8190 &\cite{C-S}\\
58 &12& $3161$ &  1& 12 &$3227$&1&10&  \multicolumn{2}{c}{-}\\
60 &12& $2095$ &  1& 12 &$2146$&1&12&  2555 &\cite{RY07} \\
62 &12& $1333$ &  1 &12&$1290$&1&12&  1860 &\cite{DH} \\
64 &12& $544$  &  1 &12& $806$ &1 &12&  1312 &\cite{CHK64} \\
66 &12& $374$  &  1 &12&$480$&1&12& 858 &\cite{C-S} (see \cite{DGH}) \\
68 &12& $136$  &  1 &12&$165$&1&12& 442-486 &\cite{DGH}, \cite{YLGI} \\
70 &12& $35$   &  1 &14&12172&1&12-14&  \multicolumn{2}{c}{-}\\
72 &14&$8064$&1 &14&$8190$&1&12-16& \multicolumn{2}{c}{-}\\
\noalign{\hrule height1pt}
\end{tabular}
}
\end{center}
\end{table}

To compare the performance of the
optimal double circulant even codes which are not self-dual
as measured by the decoding error probability with bounded distance decoding,
we list in Table~\ref{Tab:Res} the largest minimum weight $d_{SD}$
among self-dual codes of length $2n$.
We also list the smallest number $A_{SD}$ of codewords of
minimum weight $d_{SD}$ among
self-dual codes of length $2n$ and minimum weight $d_{SD}$,
when $d_P=d_{SD}$ or $d_B=d_{SD}$, along with the references.
%
From Table~\ref{Tab:Res}, we have the following results concerning the
performance of optimal double circulant even codes which are not self-dual.

\begin{thm}\label{thm}
Suppose that
\begin{align*}
(2n,d)=& (32,8), (36,8), (38,8), (40,8), (46,10), (52,10), \\
& (56,12), (60,12), (62,12), (64,12), (66,12), (68,12).
\end{align*}
Then there is an optimal double circulant even
$[2n,n,d]$ code $C$ which is not self-dual
such that $C$ performs better than any self-dual $[2n,n,d]$ code.
\end{thm}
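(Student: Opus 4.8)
The plan is to prove the theorem by exhibiting, for each of the twelve listed pairs $(2n,d)$, an explicit optimal double circulant even code $C$ that is not self-dual, compute (or read off from Tables~\ref{Tab:P} and~\ref{Tab:B}) its weight distribution, and then compare it term-by-term against the smallest possible weight distribution of a self-dual $[2n,n,d]$ code, using the characterization~\eqref{eq:WD} of the decoding error probability from~\cite[Theorem~3.4]{FLOW}. Concretely, for each such $(2n,d)$ I would take $C$ to be one of the codes satisfying (C1)--(C3) whose defining first row is recorded in Table~\ref{Tab:P} (pure case) or Table~\ref{Tab:B} (bordered case); by construction these codes have minimum weight $d_P$ or $d_B$ equal to the corresponding entry, and in each of the twelve cases the relevant entry equals $d_{SD}$, so $C$ and the self-dual codes share the same length, dimension, and minimum weight $d$.

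The heart of the argument is then a finite comparison. For each $(2n,d)$ I would invoke the parametrization of formally self-dual even weight enumerators in~\eqref{eq:WE} and Table~\ref{Tab:WE}: since every self-dual $[2n,n,d]$ code is in particular a formally self-dual even code, its weight distribution $W_{2n,d}$ is determined by the free parameters $a=A_d$ (and, where applicable, $b=A_{d+2}$ and $c=A_{d+4}$). The smallest such distribution under $\prec$ among genuine self-dual codes has $A_d=A_{d+2}=d_{SD}$ equal to the value $A_{d_{SD}}$ listed in Table~\ref{Tab:Res} together with its cited reference; call this value $a_{\min}$. It therefore suffices to check that the double circulant code $C$ has $A_d(C) < a_{\min}$, because then~\eqref{eq:WD} holds with $s=d$ and $C$ performs better than every self-dual $[2n,n,d]$ code, regardless of the higher-weight coefficients. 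Reading across Table~\ref{Tab:Res}, one sees this is exactly the situation in each of the twelve cases: e.g.\ for $(32,8)$ we have $300 = A_{d_B} < 364$; for $(38,8)$, $72 < 171$; for $(46,10)$, $851 < 1012$; for $(56,12)$, $3 < 4606$; and similarly for $(36,8)$, $(40,8)$, $(52,10)$, $(60,12)$, $(62,12)$, $(64,12)$, $(66,12)$, $(68,12)$. In the few cases where the smallest self-dual $A_{d_{SD}}$ is only known to lie in a range (such as $4606$--$8190$ for length $56$), it is enough that $A_d(C)$ is strictly below the lower end of that range.

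The main obstacle is not any single deep step but the verification burden: one must be certain that (i) the codes listed in Tables~\ref{Tab:P} and~\ref{Tab:B} genuinely attain the stated minimum weights and are not self-dual, which rests on the exhaustive {\sc Magma} search described in Section~4 and on Lemma~\ref{lem:wt} to restrict the circulant blocks; and (ii) the value $a_{\min}$ really is a lower bound for $A_d$ over \emph{all} self-dual $[2n,n,d]$ codes, which is where the cited classification and nonexistence results (\cite{C-S}, \cite{HT}, \cite{BO}, \cite{RY07}, \cite{DH}, \cite{CHK64}, \cite{DGH}, \cite{YLGI}, etc.) are needed. Once those two facts are granted, the theorem follows immediately by the twelve strict inequalities above together with~\cite[Theorem~3.4]{FLOW}.
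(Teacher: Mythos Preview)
Your approach is exactly the paper's: the theorem is read straight off Table~\ref{Tab:Res} by checking, for each of the twelve pairs, that the optimal double circulant even code has strictly fewer minimum-weight codewords than any self-dual code of the same parameters, so that \eqref{eq:WD} holds with $s=d$.

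There is, however, one slip in your case-by-case check. For $(2n,d)=(56,12)$ you write ``$3<4606$'', but the value $3$ is $A_{d_B}$ for the bordered code $B_{56}$, which has minimum weight $d_B=10$, not $12$ (see Table~\ref{Tab:Res}). Thus $B_{56}$ is a $[56,28,10]$ code and cannot witness the theorem for the pair $(56,12)$. The correct witness is the pure code $P_{56}$, which is a $[56,28,12]$ code with $A_{12}=4060$, and the relevant inequality is $4060<4606$. Once this is corrected, your argument goes through and coincides with the paper's.
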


\begin{rem} \hfill
\label{rem}
\begin{itemize}
\item[\rm (1)]
For $2n=34, 42, 44, 58$, there is a double circulant even code $C$ of
length $2n$ which is not self-dual such that
$C$ has a larger minimum weight than any self-dual code of length $2n$.

\item[\rm (2)]
Up to equivalence,
there is a unique extremal doubly even self-dual $[48,24,12]$
code~\cite{HLTP}.
There is no double circulant even $[48,24,d]$ code with $d \ge 12$
which is not self-dual.

\item[\rm (3)]
There is a self-dual $[50,25,10]$ code $C$
such that $C$ performs better than any
double circulant even $[50,25,10]$ code which is not self-dual~\cite{HT}.

\item[\rm (4)]
There is a double circulant even $[70,35,14]$ code.
The largest minimum weight among currently known self-dual
codes of length $70$ is $12$.
The weight enumerator of an extremal
self-dual $[70,35,14]$ code $C$ is uniquely determined
and the number of codewords of weight $14$ is $11730$~\cite{DH99}.
If there is such a code $C$, then $C$ performs better than any
double circulant even $[70,35,14]$ code which is not self-dual.

\item[\rm (5)]
There is a double circulant even $[72,36,14]$ code.
The largest minimum weight among currently known self-dual
codes of length $72$ is $12$.
The existence of an extremal doubly even self-dual
$[72,36,16]$ code is a long-standing open question~\cite{Sloane72}
(see also \cite[Section~12]{RS-Handbook}).
\end{itemize}
\end{rem}

At the end of this section, we examine the equivalence of some codes
in Tables~\ref{Tab:P} and \ref{Tab:B}.
We verified by {\sc Magma}~\cite{Magma} that
there is no pair of equivalent codes among
$P_{34,i}$ $(i=1,2,\ldots,15)$ and
$B_{34,i}$ $(i=1,2,\ldots,10)$,
and that there is no pair of equivalent codes among
$P_{36,i}$ $(i=1,2,3,4)$ and
$B_{36,i}$ $(i=1,2,3)$.
A formally self-dual even $[70,35,14]$ code $C_{70}$ can be found
in~\cite{GH70}.
The code $C_{70}$ is equivalent to some bordered double circulant
codes, and $C_{70}$ and $B_{70}$ have identical weight
enumerators~\cite{GH70}.
Hence, $B_{70}$ must be equivalent to $C_{70}$.

\begin{table}[thbp]
\caption{Pure double circulant even codes satisfying  (C1)--(C3)}
\label{Tab:P}
\begin{center}
{\footnotesize
\begin{tabular}{c|r|c|c}
\noalign{\hrule height1pt}
Code &  \multicolumn{1}{c|}{First row} & $d$ & $(A_d,A_{d+2},A_{d+4})$\\
\hline
$P_{32,1}$&(1100101100110101)& 8&$(348,2176,6272)$ \\
$P_{32,2}$&(1110110100010011)& 8&$(348,2176,6272)$ \\
$P_{34,1}$&(11111110001000100)& 8&$(272,2210,7956)$ \\
$P_{34,2}$&(11100000111010110)& 8&$(272,2210,7956)$ \\
$P_{34,3}$&(11110101101101100)& 8&$(272,2210,7956)$ \\
$P_{34,4}$&(11110011101101010)& 8&$(272,2210,7956)$ \\
$P_{34,5}$&(10001011101100000)& 8&$(272,2210,7956)$ \\
$P_{34,6}$&(10001100110010100)& 8&$(272,2210,7956)$ \\
$P_{34,7}$&(11101110110100000)& 8&$(272,2210,7956)$ \\
$P_{34,8}$&(10100101100011110)& 8&$(272,2210,7956)$ \\
$P_{34,9}$&(10100100110010001)& 8&$(272,2210,7956)$ \\
$P_{34,10}$&(10101010011111000)& 8&$(272,2210,7956)$ \\
$P_{34,11}$&(10001110000100110)& 8&$(272,2210,7956)$ \\
$P_{34,12}$&(11010010010001111)& 8&$(272,2210,7956)$ \\
$P_{34,13}$&(10001100001110100)& 8&$(272,2210,7956)$ \\
$P_{34,14}$&(11011010100001101)& 8&$(272,2210,7956)$ \\
$P_{34,15}$&(11100001101010011)& 8&$(272,2210,7956)$ \\
$P_{36,1}$&(101011110110000001)& 8&$(153,2448,8619)$ \\
$P_{36,2}$&(111100001000010111)& 8&$(153,2448,8619)$ \\
$P_{36,3}$&(100110111010010001)& 8&$(153,2448,8619)$ \\
$P_{36,4}$&(100001010110111100)& 8&$(153,2448,8619)$ \\
$P_{38}$&(1111000001001010110)&  8&$(76,2337,9709)$ \\
$P_{40}$&(10101101111101111000)& 8&$(25,2080,10360)$ \\
$P_{42,1}$&(100001101101110010110)& 10&$(1680,10997,48345)$ \\
$P_{42,2}$&(101010010101110110111)& 10&$(1680,10997,48345)$ \\
$P_{44}$&(1001111111001011011011)&  10&$(1144,11869,49456)$ \\
$P_{46}$&(11001011010111100000001)& 10&$(851,10787,54027)$ \\
$P_{48}$&(110111000101111101110100)& 10&$(480,10384,53664)$ \\
$P_{50}$ & (1000100001011001001011101) & 10&$(325,8650,55200)$ \\
$P_{52}$ & (10001010100011011011000001) & 10&$(156,7267,53690)$ \\
$P_{54}$ & (111000000011101101100010011) & 10&$(27,6030,49545)$ \\
$P_{56}$ & (1001100011110101110111110100) & 12&$(4060,49420,293874)$ \\
$P_{58}$ & (11011000010100000000110011010) & 12&$(3161,41412,292407)$ \\
$P_{60}$ & (100000101101110000100111010001) & 12&$(2095,37320,263205)$ \\
$P_{62}$ & (0010100111101100111111010000000) & 12&$(1333, 30597, 254975)$ \\
$P_{64}$ & (10101000110010111100110100000000) & 12&$(544, 34304, 115756)$ \\
$P_{66}$ & (100100010010000101111011100100000) & 12&$(374, 20163, 203808)$ \\
$P_{68}$ & (1001001011010110101010101011000000) & 12&$(136, 15606, 176936)$ \\
$P_{70}$ & (01011011100110100101110000110000000) & 12&$(35, 11550, 151130)$ \\
$P_{72}$ & (101101101101001101001101111100010000) & 14 &$(8064, 127809, 1202464)$ \\
\noalign{\hrule height1pt}
\end{tabular}
}
\end{center}
\end{table}

\begin{table}[thbp]
\caption{Bordered double circulant even codes satisfying  (C1)--(C3)}
\label{Tab:B}
\begin{center}
{\footnotesize
\begin{tabular}{c|r|c|c}
\noalign{\hrule height1pt}
Code &  \multicolumn{1}{c|}{First row} & $d$ & $(A_d,A_{d+2},A_{d+4})$\\
\hline
$B_{32}$&(100101010001111) &8&$(300,2560,4928)$\\
$B_{34,1}$&(1001101010001101) &8&$(272,2210,7956)$\\
$B_{34,2}$&(1110111100010110) &8&$(272,2210,7956)$\\
$B_{34,3}$&(1010100111011101) &8&$(272,2210,7956)$\\
$B_{34,4}$&(1000110111011110) &8&$(272,2210,7956)$\\
$B_{34,5}$&(1110010011010001) &8&$(272,2210,7956)$\\
$B_{34,6}$&(1101101100101000) &8&$(272,2210,7956)$\\
$B_{34,7}$&(1001001100111010) &8&$(272,2210,7956)$\\
$B_{34,8}$&(1110000111110110) &8&$(272,2210,7956)$\\
$B_{34,9}$&(1110000111011110) &8&$(272,2210,7956)$\\
$B_{34,10}$&(1001010011010011) &8&$(272,2210,7956)$\\
$B_{36,1}$&(11001011010011101) &8&$(153,2448,8619)$\\
$B_{36,2}$&(11011100001010111) &8&$(153,2448,8619)$\\
$B_{36,3}$&(10001000101011011) &8&$(153,2448,8619)$\\
$B_{38}$&(110000101101101000) &8&$(72,2357,9681)$\\
$B_{40,1}$&(1100000111101000100) &8&$(38,2014,10526)$\\
$B_{40,2}$&(1010011001110001110) &8&$(38,2014,10526)$\\
$B_{42}$&(10011111001111010010) &10&$(1682,10979,48415)$\\
$B_{44,1}$&(101010000011101100110) &10&$(1267,10885,52654)$\\
$B_{44,2}$&(111100011011101010111) &10&$(1267,10885,52654)$\\
$B_{44,3}$&(110000111111101101101) &10&$(1267,10885,52654)$\\
$B_{46,1}$&(1110100010011100011000) &10&$(858,10738,54153)$\\
$B_{46,2}$&(1111100111111001000101) &10&$(858,10738,54153)$\\
$B_{48}$&(11010101000010011100010) &10&$(575,9752,55453)$\\
$B_{50}$ & (111110011001100111100010) &10&$(356,8524,55036)$\\
$B_{52}$ & (1010001000101001100100101) &10&$(150,7375,52850)$\\
$B_{54}$ & (11101011011000000010001110) &10&$(52,5876,50156)$\\
$B_{56}$ & (100111100001001000000100011) &10&$(3,4545,45477)$\\
$B_{58}$ & (1101101000010100111100110111) &12&$(3227,40950,293463)$\\
$B_{60}$ & (11001101111100101010111101100) &12&$(2146,36163,273876)$\\
$B_{62}$ & (110010100011110110110000000000) &12&$(1290, 30850, 254428)$\\
$B_{64}$ & (1000010101011010011011010000000) &12&$(806, 25358, 226982)$\\
$B_{66}$ & (10101110111101100111111011010000) &12&$(480, 19848, 203112)$\\
$B_{68}$ & (100011110101110110010101010100000) &12&$(165, 15620, 176099)$\\
$B_{70}$ & (1101000101110100101011110000000000) &14&$(12172, 147390, 1352811)$\\
$B_{72}$ & (10011110101111100101111001110111000) &14&$(8190, 126952, 1204560)$\\
\noalign{\hrule height1pt}
\end{tabular}
}
\end{center}
\end{table}

\section{Self-dual codes of length 54}

In this section, we consider the remaining case.
Table~\ref{Tab:Res} gives rise to a natural question,
namely, is there a self-dual $[54,27,10]$ code such that
$A_{10} < 27$?
A self-dual $[54,27,10]$ code $C$ and
its shadow $S$ (see~\cite{C-S} for the definition of shadows),
have the following possible weight enumerators $W_i(C)$ and $W_i(S)$,
respectively $(i=1,2)$:
\begin{align*}
&\left\{
\begin{array}{lll}
W_1(C) &=& 1
+ (351  - 8 \beta )y^{10}
+ (5031  + 24 \beta )y^{12}
+ \cdots, \\
W_1(S) &=&
\beta y^7
+ (2808 - 10 \beta )y^{11}
+ \cdots,
\end{array}\right. \\
&\left\{
\begin{array}{lll}
W_2(C) &=&
1
+ (351 - 8 \beta )y^{10}
+ (5543  + 24 \beta )y^{12}
+ \cdots,\\
W_2(S) &=&
y^3
+(- 12 + \beta) y^7
+( 2874 - 10 \beta) y^{11}
 + \cdots,\\
\end{array}\right.
\end{align*}
where $\beta$ is an integer with $0 \le \beta \le 43$ for $i=1$
and $12 \le \beta \le 43$ for $i=2$~\cite{C-S}.
Self-dual codes exist with $W_1(C)$ for $\beta=0,1,\ldots, 20,22,26$ and
self-dual codes exist with $W_2(C)$ for $\beta=12,13,\ldots, 22,24,26,27$
(see~\cite{BO}, \cite{C-S}, \cite{Hu05},
\cite{YL14}, \cite{YR11}).
Note that the smallest number $A_{10}$
among currently known self-dual codes of length $54$
and minimum weight $10$ is $135$.

\begin{prop}
If there is a self-dual $[54,27,10]$ code $C$ such that
$A_{10} < 27$, then $C$ has weight enumerator $W_1(C)$ with
$41 \le \beta \le 43$.
\end{prop}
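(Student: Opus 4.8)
The plan is to combine an elementary numerical bound with one structural fact about the shadow.

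First I would note that in \emph{both} possible pairs the coefficient of $y^{10}$ equals $351-8\beta$, so $A_{10}<27$ is equivalent to $8\beta>324$, i.e.\ $\beta\ge 41$; since $\beta\le 43$ in both families, it remains only to exclude the weight enumerator $W_2(C)$ for $\beta\in\{41,42,43\}$. Equivalently, I must show that if $C$ has the pair $(W_2(C),W_2(S))$ then $\beta\le 40$. This is where the shadow does the work: $W_2(S)=y^3+(\beta-12)y^7+(2874-10\beta)y^{11}+\cdots$ exhibits a weight-$3$ shadow vector, while $W_1(S)=\beta y^7+\cdots$ has none, so the argument genuinely picks out the $W_2$ case and leaves $W_1$ untouched.

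So assume $C$ has weight enumerator $W_2(C)$ with shadow $S$, and recall that $S$ is a coset of $C$, so $s+s'\in C$ for all $s,s'\in S$. Let $s$ be the unique weight-$3$ vector of $S$ and $T=\supp(s)$. For any weight-$7$ vector $s'\in S$, put $c:=s+s'\in C$; then
\[
\wt(c)=3+7-2|T\cap\supp(s')|=10-2|T\cap\supp(s')|\le 10 ,
\]
and since $c\ne 0$ and $C$ has minimum weight $10$, necessarily $\wt(c)=10$ (and $T\cap\supp(s')=\emptyset$). As $s'\mapsto s+s'$ is injective, the number $\beta-12$ of weight-$7$ shadow vectors is at most the number $A_{10}=351-8\beta$ of weight-$10$ codewords. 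Hence $\beta-12\le 351-8\beta$, i.e.\ $9\beta\le 363$, i.e.\ $\beta\le 40$, contradicting $\beta\ge 41$. Therefore $C$ has weight enumerator $W_1(C)$, and with $\beta\ge 41$ and $\beta\le 43$ we conclude $41\le\beta\le 43$.

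The only step that is not pure bookkeeping is the inequality $\wt(c)\le 10$: it is precisely there that a small (weight-$3$) shadow vector together with the hypothesis that $C$ has minimum weight $10$ is exploited, and everything else is arithmetic with the tabulated enumerators. I would still double-check the classical facts being used — that $S$ is a single coset of $C$ (equivalently $S+S\subseteq C$), and that the exponents appearing in $W_i(S)$ are consistent with shadow weights $\equiv 27\equiv 3\pmod 4$ — though these are standard and already implicit in the form of $W_i(S)$ quoted from \cite{C-S}.
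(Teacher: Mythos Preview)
Your proof is correct and follows essentially the same route as the paper: both show that in the $W_2$ case the map $s'\mapsto s+s'$ sends each weight-$7$ shadow vector to a weight-$10$ codeword, yielding $\beta-12\le 351-8\beta$ and hence $\beta\le 40$, which rules out $W_2$ once $A_{10}<27$ forces $\beta\ge 41$. Your write-up is slightly more explicit about the weight computation and the injectivity, but the argument is the same.
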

\begin{proof}
Let $C$  be a self-dual $[54,27,10]$ code such that
$A_{10} < 27$.
Suppose that $C$ has weight enumerator $W_2(C)$.
From the assumption, $\beta > 12$.  Thus, there is a vector of weight $7$
in the shadow $S$.
Let $x_1$ and $x_2$ be vectors of weights $3$ and $7$ in $S$,
respectively.
Since the sum of two distinct vectors of $S$ is a codeword of $C$,
$x_1+x_2$ must be a codeword of weight $10$.
From the coefficient of $y^7$ in $W_2(S)$, we have
\begin{align*}
351-8\beta \ge& \beta -12 \\
40 \ge & \beta.
\end{align*}
Hence, we have that $A_{10} \ge  31$,
which is a contradiction.
It follows from $A_{10} < 27$ that
$41 \le \beta \le 43$ in $W_1(C)$.
\end{proof}

Hence, the above question can be refined as follows.

\begin{Q}
Is there a  self-dual $[54,27,10]$ code
which has weight enumerator $W_1(C)$ with $41 \le \beta \le 43$?
\end{Q}

As a consequence of the proof of the above proposition,
we have the following.

\begin{cor}
If there is a self-dual $[54,27,10]$ code with weight enumerator $W_2(C)$,
then $\beta \in \{12,13,\ldots,40\}$.
\end{cor}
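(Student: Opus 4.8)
The plan is to reuse, almost verbatim, the computation already carried out in the proof of the Proposition. That argument produced the inequality $351 - 8\beta \ge \beta - 12$, i.e. $\beta \le 40$, under the sole hypothesis that $C$ is a self-dual $[54,27,10]$ code with weight enumerator $W_2(C)$ for which there exists a weight-$7$ vector in the shadow $S$. So the first step is to observe that every code with weight enumerator $W_2(C)$ automatically satisfies that hypothesis: from $W_2(S)$ the coefficient of $y^7$ is $\beta - 12$, and since the defining range for $i=2$ is $12 \le \beta \le 43$, if $\beta = 12$ there is no weight-$7$ shadow vector, but the table entry $y^3$ guarantees a weight-$3$ shadow vector for every $\beta$ in this range, so I should be slightly careful: the argument pairing $x_1$ (weight $3$) with $x_2$ (weight $7$) genuinely needs $\beta \ge 13$ to supply $x_2$.

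Thus the clean statement is: for $\beta \ge 13$ the pairing argument gives $\beta \le 40$, so $13 \le \beta \le 40$; and $\beta = 12$ is already permitted by the stated range. Combining, $\beta \in \{12, 13, \ldots, 40\}$, which is exactly the claimed conclusion. Concretely, I would write: suppose $C$ has weight enumerator $W_2(C)$; then $\beta \ge 12$ by the known constraints. If $\beta = 12$ we are done. If $\beta \ge 13$, pick $x_1 \in S$ of weight $3$ and $x_2 \in S$ of weight $7$ (both exist), note $x_1 + x_2 \in C$ has weight $10$, and count: the number of weight-$10$ codewords of the form $x_1 + x_2$ with $x_2$ of weight $7$ in $S$ is at least the number of weight-$7$ shadow vectors, namely $\beta - 12$ (this uses that distinct $x_2$ give distinct sums, $x_1$ being fixed). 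Hence $A_{10} = 351 - 8\beta \ge \beta - 12$, giving $\beta \le 40$.

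I do not anticipate a genuine obstacle here; the entire content is bookkeeping, since the substantive inequality was already established inside the Proposition's proof and the Corollary is explicitly flagged as a consequence of that proof. The only point requiring a moment's attention is the edge case $\beta = 12$, where the weight-$7$ stratum of $S$ is empty and the pairing argument is vacuous — but that value is retained in the conclusion, so nothing is lost. I would phrase the proof in two or three sentences, citing the computation in the proof of the Proposition rather than repeating it, and noting that the upper bound $\beta \le 43$ from~\cite{C-S} together with $\beta \le 40$ yields $\beta \le 40$, while $\beta \ge 12$ is part of the hypothesis on $W_2(C)$.
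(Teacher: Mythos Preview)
Your proposal is correct and follows exactly the approach the paper intends: the corollary is stated without its own proof, merely as ``a consequence of the proof of the above proposition,'' and your write-up simply extracts the inequality $351-8\beta \ge \beta-12$ from that proof (valid whenever a weight-$7$ shadow vector exists, i.e.\ $\beta\ge 13$) and combines it with the standing constraint $\beta\ge 12$. Your explicit treatment of the edge case $\beta=12$ is a welcome bit of tidiness that the paper leaves implicit.
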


\section{Performance of extremal self-dual codes of lengths 88 and 112}

The performance of extremal doubly even and singly even self-dual codes was compared
in \cite[Section 4]{BMW} for lengths $24k+8$ $(k=1,2,3,4)$
and $24k+16$ $(k=1,2)$.
In this section, we consider the case for lengths $24k+16$ $(k=3,4)$.

An extremal doubly even self-dual code of length $88$ has
the following weight enumerator (see~\cite{MS73}):
\[
1 + 32164 y^{16} + 6992832 y^{20} + 535731625 y^{24} + \cdots.
\]
There are at least $470$ inequivalent
extremal doubly even self-dual codes of length $88$
(see~\cite{GH-DCC88}).
An extremal singly even self-dual code of length $88$
was given in~\cite{HN} which has the following weight enumerator:
\[
1 + 18436 y^{16} + 268928 y^{18} + 3493248 y^{20}
+ 267717065 y^{24} +\cdots.
\]
Hence, there is an extremal singly even self-dual code $C$ of
length $88$ such that $C$ performs better than any
extremal doubly even self-dual code of that length.

An extremal doubly even self-dual code of length $112$ has
the following weight enumerator (see~\cite{MS73}):
\[
1 + 355740 y^{20} + 95307030 y^{24} + 10847290300 y^{28} + \cdots.
\]
An extremal doubly even self-dual code of length $112$
was found in~\cite{H112}.
By \cite[Theorem~5]{C-S}, the possible weight enumerators
$W_i(C)$ and $W_i(S)$
of an extremal singly even self-dual code $C$ of length $112$
and its shadow $S$ are:
\begin{align*}
&\left\{
\begin{array}{lll}
W_1(C) &=&
1
+(157388+16a)y^{20}
+(3125056-64a)y^{22}
\\&&
+(52740406-160a)y^{24}
+ \cdots, \\
W_1(S) &=&
y^4
+(-2002+a)y^{16}
+(428099-20a)y^{20}
+ \cdots,\\
\end{array}\right. \\
&\left\{
\begin{array}{lll}
W_2(C) &=&
1
+(157388+16a)y^{20}
+(3431232+1024b-64a)y^{22}
\\&&
+(48040246-10240b-160a)y^{24}
+ \cdots,\\
W_2(S) &=&
y^8
+(-24-b)y^{12}
+(276+22b+a)y^{16}
\\&&
+(394680-231b-20a)y^{20}
+ \cdots,\\
\end{array}\right. \\
&\left\{
\begin{array}{lll}
W_3(C) &=&
1
+(157388+16a)y^{20}
+(3431232+1024b-64a)y^{22}
\\&&
+(47974710-10240b-160a)y^{24}
+ \cdots,\\
W_3(S) &=&
-by^{12}
+(22b+a)y^{16}
+(396704-231b-20a)y^{20}
+ \cdots,\\
\end{array}\right.
\end{align*}
where $a,b$ are integers.
Currently, it is not known whether there is an extremal
singly even self-dual code of length $112$.

\bigskip
\noindent
{\bf Acknowledgment.}
This work was supported by JSPS KAKENHI Grant Number 15H03633.


\end{document}